\newcommand{\cA}{\ensuremath{\mathcal A}}
\newcommand{\cB}{\ensuremath{\mathcal B}}
\newcommand{\cE}{\ensuremath{\mathcal E}}
\newcommand{\cH}{\ensuremath{\mathcal H}}
\newcommand{\cI}{\ensuremath{\mathcal I}}
\newcommand{\cK}{\ensuremath{\mathcal K}}
\newcommand{\cP}{\ensuremath{\mathcal P}}
\newcommand{\cS}{\ensuremath{\mathcal S}}
\newcommand{\cT}{\ensuremath{\mathcal T}}
\newcommand{\eps}{\varepsilon}
\renewcommand{\phi}{\varphi}
\renewcommand{\rho}{\varrho}
\let\setminus=\smallsetminus
\let\emptyset=\varnothing
\newcommand{\Gknp}{\mathcal{G}^{(k)}(n,p)}
\newcommand\redsout{\bgroup\markoverwith{\textcolor{red}{\rule[0.5ex]{2pt}{0.5pt}}}\ULon}
\declaretheorem[parent=section]{theorem}
\declaretheorem[sibling=theorem]{lemma}
\declaretheorem[sibling=theorem]{claim}
\declaretheorem[sibling=theorem,style=definition]{definition}
\setlist{itemsep=0.1em, topsep=0.1em, parsep=0.1em, partopsep=0.1em}
\colorlet{RoyalRed}{red!70!black}
\definecolor{RoyalBlue}{rgb}{0.25, 0.41, 0.88}
\definecolor{RoyalAzure}{rgb}{0.0, 0.22, 0.66}
\newlength{\bibitemsep}\setlength{\bibitemsep}{0.5pt}
\newlength{\bibparskip}\setlength{\bibparskip}{0.5pt}
\let\oldthebibliography\thebibliography
\renewcommand\thebibliography[1]{%
  \oldthebibliography{#1}%
  \setlength{\parskip}{\bibitemsep}%
  \setlength{\itemsep}{\bibparskip}%
}
\title{Probabilistic hypergraph containers}
\author{
  Rajko Nenadov\thanks{School of Computer Science, University of Auckland, New Zealand. Email: \texttt{rajkon@gmail.com}.}
}
\date{}
\begin{document}
\maketitle

\begin{abstract}
Given a $k$-uniform hypergraph $\cH$ and sufficiently large $m \gg m_0(\cH)$, we show that an $m$-element set $I \subseteq V(\cH)$, chosen uniformly at random, with probability $1 - e^{-\omega(m)}$ is either not independent or is contained in an almost-independent set in $\cH$ which, crucially, can be constructed from carefully chosen $o(m)$ vertices of $I$. As a corollary, this implies that if the largest almost-independent set in $\cH$ is of size $o(v(\cH))$ then $I$ itself is an independent set with probability $e^{-\omega(m)}$. More generally, $I$ is very likely to inherit structural properties of almost-independent sets in $\cH$.

The value $m_0(\cH)$ coincides with that for which Janson's inequality gives that $I$ is independent with probability at most $e^{-\Theta(m_0)}$. On the one hand, our result is a significant strengthening of Janson's inequality in the range $m \gg m_0$. On the other hand, it can be seen as a probabilistic variant of hypergraph container theorems, developed by Balogh, Morris and Samotij and, independently, by Saxton and Thomason. While being strictly weaker than the original container theorems in the sense that it does not apply to all independent sets of size $m$, it is nonetheless sufficient for many applications and admits a short proof using probabilistic ideas.
\end{abstract}

\section{Introduction}

Let $\cH$ be a $k$-uniform hypergraph, a \emph{$k$-graph} for short, for some $k \in \mathbb{N}$. Throughout the paper we use $N$ (and sometimes $v(\cH)$) to denote the number of vertices in $\cH$, and $e(\cH)$ to denote the number of hyperedges. What is the probability that an $m$-element subset $I \subseteq V(\cH)$, chosen uniformly at random among all $m$-element subsets, is an independent set in $\cH$? This question is addressed by Janson's inequality:
\begin{equation} \label{eq:janson}
    \Pr[I \text{ is independent}] < C e^{- \mu_{\cH}(m)^2 / \Delta_{\cH}(m)}
\end{equation}
for some sufficiently large (absolute) constant $C > 1$, where
$$
    \mu_\cH(m) = e(\cH) (m / N)^k
$$
roughly corresponds to the expected number of edges induced by $I$, and
\begin{gather*}
    \Delta_{\cH}(m) = \sum_{(e, e') \in \Lambda_\cH} (m/N)^{|e \cup e'|} \\
    \Lambda_\cH = \left\{ (e, e') \in \cH \times \cH \colon e \cap e' \neq \emptyset \right\}
\end{gather*}
corresponds to the usual estimate of the variance of this number. The standard version of Janson's inequality is stated for \emph{binomial} random subsets, that is, when $I$ is formed by taking each element with  probability $p = m / N$, independently of all other elements (e.g.\ see \cite{alon_spencer}). Inequality \eqref{eq:janson} follows from it by standard concentration bounds, and we refer the reader to \cite[Lemma 5.2]{alon14sumfree} for details.

In this paper, we are interested in the case where \eqref{eq:janson} gives that $I$ is independent with probability at most $e^{-\Theta(m)}$, which happens for $\Delta_\cH(m) = O(\mu_\cH(m)^2 / m)$. When $I$ is a binomial random subset with $p = m / N$, this is the correct order of magnitude as $I$ is an empty set (hence independent) with probability $(1-p)^N \approx e^{-m}$. The inequality is also correct in many instances with respect to uniform sampling, however, it is perhaps less known (and somewhat surprising) that there are cases where \eqref{eq:janson} significantly overestimates the true probability, that is, where the true probability of $I$ being independent decays as $e^{-\omega(m)}$. Let us look at two examples of $3$-graphs, $\cT_n$ and $\cA_n$, which demonstrate this. The vertices of $\cT_n$ correspond to the edges of $K_n$, a complete graph with $n$ vertices (hence $N = \binom{n}{2}$), and three vertices form a hyperedge if the corresponding edges in $K_n$ form a triangle. The vertices of $\cA_n$ are integers $\{1, \ldots, n\}$ (hence $N = n$), and three vertices (that is, numbers) form a hyperedge if they form a $3$-term arithmetic progression. It is an easy exercise to show that  $\Delta_{\cH}(m) = \Theta(\mu_{\cH}(m)^2 / m)$ if $m \ge N^{3/4}$ ($\cH = \cT_n)$ and $m \ge N^{1/2}$ ($\cH = \cA_n$). In the case where $\cH = \cT_n$, the probability of $I$ being independent is indeed $e^{-\Theta(m)}$ in this range, however for $\cH = \cA_n$ it decays as $e^{-\omega(m)}$ for $m \gg \sqrt{N}$ (see \cite{nenadov2021small, samotij15counting} for self-contained proofs, and \cite{balogh2017kap} for further refinements). Note that this does not contradict the discussion about the binomial case as the event of sampling an empty set here happens with probability $0$ (i.e.\ we always have exactly $m$ elements).

The previous two examples show that inequality \eqref{eq:janson}, in general, cannot be improved, but also that parameters $\mu_\cH$ and $\Delta_\cH$ do not capture all the relevant aspects of $\cH$. In particular, the main qualitative difference between $\cT_n$ and $\cA_n$, not captured by these parameters, is the size of a largest independent set. A largest independent set in $\cT_n$ is of size roughly $N/2$ (Mantel's theorem), and in $\cA_n$ of size $o(N)$ (Roth's theorem), and we can lower bound the probability of $I$ being independent by the probability it is a subset of such a (fixed) independent set, which is $e^{-\Theta(m)}$ and $e^{-\omega(m)}$ respectively. Our main result, or rather its corollary, shows that the size of a largest (almost-)independent set is, indeed, a missing component in \eqref{eq:janson}. Briefly, it states that with overwhelmingly high probability a sampled set $I$ is either not independent or is a subset of an almost independent set in $\cH$. This would be rather trivial -- after all, if $I$ is an independent set then $I \subseteq I$ makes the previous statement vacuously hold -- if it was not for the additional fact that such an almost-independent set can be constructed by looking only at some carefully chosen $o(m)$ vertices of $I$. As we will shortly see, this implies that the previously discussed lower bound coming from the probability that we sample a subset of a fixed independent set in $\cH$ gives roughly the correct exponent.

To state the result concisely we need a few more definitions. Given a subset $V' \subseteq V(\cH)$, we use $e(V')$ as shorthand for $e(\cH[V'])$, the number of hyperedges in the subgraph of $\cH$ induced by $V'$. Given $\eps > 0$, denote with $\cI_{\eps}(\cH)$ the family of all subsets $S \subseteq V(\cH)$ with $e(S) \le \eps e(\cH)$. Finally, given a set $F$, let $\cP(F)$ denote the family of all subsets of $F$.

\begin{theorem}[Probabilistic Hypergraph Containers] \label{thm:containers}
    For every $k \in \mathbb{N}$ and $\eps, B > 0$, there exists $\gamma, T > 0$ such that the following holds. Let $\cH$ be an $N$-vertex $k$-graph, and suppose $m_0 \in \mathbb{N}$ $(m_0 < N)$ satisfies
    \begin{equation} \label{eq:delta_m0}
        \Delta_{\cH}(m_0) \le B \mu_\cH^2(m_0) / m_0.
    \end{equation}
    Then for every $Tm_0 \le m < N$ there exists a function $f_m \colon \cP(V(\cH))^{k-1} \rightarrow \cI_\eps(\cH)$ such that an $m$-element $I \subseteq V(\cH)$, chosen uniformly at random, with probability at least $1 - \eps^m$ satisfies (at least) one of the following properties:
    \begin{enumerate}[(P1)]
        \item \label{prop:i} $e(I) \ge \gamma \cdot \mu_\cH(m)$, or
        \item \label{prop:ii} There exists $F \subseteq I$, $|F| = \eps m$, such that for some $\mathbf{F} \in \cP(F)^{k-1}$ we have
        $$
            I \subseteq F \cup f_m(\mathbf{F}).
        $$
    \end{enumerate}
\end{theorem}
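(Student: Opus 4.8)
\emph{Overall approach.} I would prove this as a probabilistic version of the container iteration: $f_m$ will be the ``container map'' of an explicit deterministic algorithm which, run on an $m$-set $I$, either exhibits property~\ref{prop:i} in passing or outputs a fingerprint $F\subseteq I$ and a tuple $\mathbf F\in\cT_{k-1}(F)$ with $I\subseteq F\cup f_m(\mathbf F)$; the randomness of $I$ is then spent only on controlling the size of $F$.

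\emph{The algorithm and $f_m$.} The algorithm maintains a shrinking set $A\subseteq V(\cH)$, initially $V(\cH)$, and grows a fingerprint over $O_\eps(1)$ rounds. In a round, if $e(A)\le\eps e(\cH)$ it halts and returns $A$ as the container; otherwise it runs the usual descent that peels the $k$-graph $\cH[A]$ through its successive links down to $1$-graphs, choosing a heavy vertex at each of the $k-1$ levels, records which of those vertices lie in $I$ (the level-$i$ choices, accumulated over all rounds, become the $i$-th coordinate of $\mathbf F$), deletes from $A$ the co-neighbourhoods thereby pinned down --- destroying a constant fraction of $e(A)$ --- and additionally deletes the few vertices of $I$ that this removes, keeping $I\subseteq F\cup A$; the count of those extra deletions is small precisely because $e(\cH[I])$ is small on the event that property~\ref{prop:i} fails. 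As the deletions in a round depend only on which heavy objects are active, hence only on the fingerprint so far, the final $A$ is a function of $\mathbf F$ alone; define $f_m$ to be this function, set to $\emptyset$ when the run misbehaves, which keeps $f_m$ valued in $\cI_\eps(\cH)$. With this setup, whenever property~\ref{prop:i} fails the inclusion of property~\ref{prop:ii} holds deterministically (padding $F$ up to size exactly $\eps m$), so the theorem reduces to showing that, with probability at least $1-\eps^m$, either property~\ref{prop:i} holds or the produced fingerprint has size at most $\eps m$.

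\emph{Bounding the fingerprint.} I would first record that \eqref{eq:delta_m0} is self-propagating: by \eqref{eq:delta_def}, $\Delta_{\cH}(m)/\big(\mu_{\cH}^2(m)/m\big)$ is a sum of terms $N^{|e\cap e'|}\, m^{1-|e\cap e'|}/e(\cH)^2$, each non-increasing in $m$ since $|e\cap e'|\ge 1$; hence $\Delta_{\cH}(m)\le B\mu_{\cH}^2(m)/m$ for every $m\ge m_0$ --- and similarly for the link-hypergraphs the algorithm produces, up to $\eps$-dependent adjustments of $B$ --- so Janson's inequality \eqref{eq:janson} is available at scale $m$ throughout. On the event that property~\ref{prop:i} fails, the fingerprint can exceed $\eps m$ only if one of the $O_\eps(1)$ rounds overshoots, and this forces the random set $I$ to behave atypically --- to be much closer to independent inside the container $\cH[A]$ of that round (a hypergraph with more than $\eps e(\cH)$ edges) than a typical $I$ would be --- an event which a lower-tail application of Janson shows has probability at most $\eps^m$, using the standing assumption $m\ge Tm_0$ (and with $\gamma$ chosen polynomially small in $\eps$ so as to absorb a factor $\log(1/\eps)$). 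A union bound over the $O_\eps(1)$ rounds and over the boundedly many fingerprint states the algorithm can reach after an overshoot then completes the proof.

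\emph{Main obstacle.} The heart of the matter is this last step: pushing the container method's supersaturation/sampling dichotomy up to failure probability $\eps^m$ --- rather than the $e^{-\Theta(m)}$ that Janson alone yields --- while relying only on the \emph{averaged} quantity $\Delta_{\cH}$ at the \emph{single} scale $m_0$ and on no maximum-codegree hypothesis. The delicate point is that the union bound must stay genuinely algorithmic: summing over \emph{all} dense containers, or over all subsets of heavy vertices, is exponentially too wasteful and would erase the entire gain, so one can only afford to sum over the comparatively few fingerprint configurations that the algorithm actually visits. Making the per-round Janson estimate, the ``heavy'' thresholds, and the $O_\eps(1)$-round bookkeeping mutually consistent --- so that the fingerprint lands at size exactly $\eps m$ just as the container drops below $\eps e(\cH)$ edges --- is where essentially all of the genuine work lies; everything else is routine.
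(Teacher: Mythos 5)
There is a genuine gap, and it sits exactly where you locate ``essentially all of the genuine work'': the per-round probabilistic estimate cannot deliver the failure probability $\eps^m$. A lower-tail application of Janson's inequality to the number of edges of a dense container $\cH[A]$ hit by $I$ gives an exponent of order $\mu_{\cH[A]}(m)^2/\hat\Delta_{\cH[A]}(m)$, and under \eqref{eq:delta_m0} (which, as you correctly observe, propagates to all $m\ge m_0$) this is at best $\Theta(m/B)$ in the worst case. Since the theorem quantifies over all $\eps>0$ with $B$ fixed, $e^{-\Theta(m/B)}$ is far larger than $\eps^m$ once $\eps<e^{-c/B}$, and no choice of the theorem's constants rescues this: the generalized lower tail saturates at $e^{-\mu^2/(2\hat\Delta)}$ no matter how small you take the threshold $\gamma$, and increasing $T$ rescales $m$ but leaves the constant $1/B$ in front of it untouched. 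The union bound you defer to --- ``the boundedly many fingerprint states the algorithm can reach after an overshoot'' --- is also not bounded: fingerprints are subsets of $V(\cH)$ of size up to $\eps m$, of which there are about $\binom{N}{\eps m}$, so a per-fingerprint Janson bound would additionally have to beat that count; while restricting to the single fingerprint the run actually visits makes the container $A$ a random object (a function of $I$), to which Janson for a fixed hypergraph no longer applies. I would also flag the parenthetical claim that the link hypergraphs inherit \eqref{eq:delta_m0} up to adjustments of $B$: this is false in general, since links can have badly skewed degree sequences even when the averaged quantity $\Delta_\cH$ is controlled, and the paper deliberately never passes to links.

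The paper resolves precisely this difficulty by spending the probability budget on an \emph{upper-tail} count rather than a lower-tail one, and by making the bad event a property of $I$ alone, fixed before any algorithm runs. Via the R\"odl--Ruci\'nski deletion method (Lemma \ref{lemma:deletion_general}) one discards $\beta^m\binom{N}{m}$ exceptional sets so that, after deleting a small $X\subseteq I$, the number of unions $e\cup e'$ of intersecting edge pairs meeting $I\setminus X$ in at least $s'$ elements is at most a constant multiple of its mean, simultaneously for all $s'$; the deletion method trades a constant factor in the count for an arbitrarily good rate $\beta^m$, a trade-off Janson's lower tail does not offer, and this is the sole source of the superexponential bound. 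These counts then feed a second-moment and Paley--Zygmund argument (Lemma \ref{lemma:saturating}) showing that a random small $W\subseteq I'$ is \emph{saturating}: $\Omega(N)$ vertices of the current ambient set $D_{k'}$ have large $\deg_{k'}$ into $W$, so each round of the iteration in the proof of the theorem deletes $\Omega(N)$ vertices from $D_{k'}$ and the fingerprint closes out at size $O(\eps m)$ \emph{deterministically}, conditioned only on $I\notin\cB_m$. Your outer bookkeeping (container map determined by the fingerprint, padding $F$, the monotonicity of $\Delta_\cH(m)/(\mu_\cH^2(m)/m)$) is fine, but the engine driving the probability bound needs to be replaced.
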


Let us describe a typical application of Theorem \ref{thm:containers}. Suppose $\cH$ is such that $e(S) < \eps e(\cH)$ implies $|S| \le \beta N$, where $\beta \rightarrow 0$ as $\eps \rightarrow 0$ and $N \rightarrow \infty$ (technically, we need to consider a family of hypergraphs $\{\cH_i\}$). This is, for example, the case for $\cH = \cA_n$ due to Roth's theorem, and for hypergraphs constructed from longer arithmetic progressions due to Szemer\'edi's theorem. We use Theorem \ref{thm:containers} to estimate the probability that an $m$-element $I \subseteq V(\cH)$ sampled uniformly at random, for sufficiently large $m$, is independent. Choose $\eps > 0$, and suppose $m_0$ satisfies \eqref{eq:delta_m0} and $m \ge T(\eps) m_0$. For each $F \subseteq V(\cH)$ of size $\eps m$ and $\mathbf{F} \in \cP(F)^{k-1}$, consider the set $A = f_m(\mathbf{F})$. This gives us a family of $t = \binom{N}{\eps m} 2^{\eps m k}$ pairs of sets, $(F_1, A_1), \ldots, (F_t, A_t)$, where each $F_i$ is of size $\eps m$ and $A_i$ induces at most $\eps e(\cH)$ edges and, therefore, is of size $|A_i| \le \beta N$. Let $\cE_I$ denote the event that $I$ is an independent set, and let $\cE_T$ denote the event that $I$ satisfies the conclusion of Theorem \ref{thm:containers}. Then
$$
    \Pr[\cE_I] = \Pr[\cE_I \mid \cE_T] \Pr[\cE_T] + \Pr[\cE_I \mid \overline{\cE_T}] \Pr[\overline{\cE_T}] 
    \le \Pr[\cE_I \mid \cE_T] + \Pr[\overline{\cE_T}] \le \Pr[\cE_I \mid \cE_T] + \eps^m.
$$
Conditioning on the event $\cE_T$, if $I$ is an independent set then $F_i \subseteq I$ and $I \setminus F_i \subseteq A_i$ for some $i \in [t]$. Calculating the probability that this happens involves manipulation of binomial coefficients, and it eventually gives $e^{-\zeta m}$ for $\zeta \rightarrow 0$ as $\beta \rightarrow 0$ (see the proof of \cite[Theorem 1.1]{balogh2015indset} for details). Therefore, $I$ is independent with probability $e^{-\omega(m)}$ as $m \gg m_0$. As a comparison, Janson's inequality implies that $I$ is independent with probability at most $e^{-\Theta(m)}$ in this range of $m$.

A reader familiar with recent developments in extremal and probabilistic combinatorics will notice that Theorem \ref{thm:containers} is, in fact, an approximate (or probabilistic) version of the result of Balogh, Morris, and Samotij \cite{balogh2015indset} and Saxton and Thomason \cite{saxton2015containers}, colloquially called \emph{hypergraph container theorems} (see also an excellent survey by Balogh, Morris, and Samotij \cite{balogh2018survey}). In particular, both of these results state that the described property holds with probability exactly $1$, that is, for every $I$ which does not satisfy \ref{prop:i} the part \ref{prop:ii} holds. With very little effort, hypergraph containers imply almost all known extremal results in random graphs, some of which were originally proven in breakthroughs by Schacht \cite{schacht2016extremal} and Conlon and Gowers \cite{conlon2016combinatorial}, and many counting results, such as the celebrated K\L R conjecture \cite{kohayakawa1997k} or the number of maximal triangle-free graphs \cite{balogh14maximal}.  All of these results can also be derived from Theorem \ref{thm:containers} in much the same way, and we demonstrate its use to prove one such new result in Section \ref{sec:application}. That being said, there are also results, such as \cite{balogh18solymosi,ferber20super,morris2016cyclefree}, where Theorem \ref{thm:containers} does not suffice. These examples rely on an iterative application of the containers, for which Theorem \ref{thm:containers}, due to the existence of an exceptional family of `bad' $m$-element subsets, is not suited.

As remarked earlier, Theorem \ref{thm:containers} strengthens Janson's inequality in the higher range of $m$. However, it does not imply Janson's inequality in the lower range, thus the two are not comparable. The upper bound on $\Delta_\cH(m_0)$ is very close in spirit to the notion of \emph{$(K, p)$-boundedness} from \cite{schacht2016extremal} and the assumption in \cite[Theorem 2.1]{balogh2020efficient}. The main advantage of working directly with $\Delta_\cH(m_0)$ is that it is exactly the parameter used in Janson's inequality, making the cases where a container-type statement applies more transparent. The main value of our new proof lies in the simplicity and transparency of the ideas which, in our view, exploit the very essence of why the existence of the containers (i.e.\ the existence of $F$ and $f$ satisfying \ref{prop:ii}) is not surprising. 

\subsection{Proof outline}

The proofs of container theorems from \cite{balogh2015indset} and \cite{saxton2015containers} are roughly along the same lines and differ mainly in the analysis of an otherwise very similar algorithm for finding a subset $F \subseteq I$ and constructing containers. A few other proofs have been obtained since, including the recent work by Balogh and Samotij \cite{balogh2020efficient} which provides almost optimal dependency of parameters, a short proof by Bernshteyn, Delcourt, Towsner, and Tserunyan \cite{bernshteyn2019short}, and another simpler proof by Saxton and Thomason \cite{saxton2016simple} of a variant which only applies to linear hypergraphs. The proof we present here uses probabilistic ideas and, at its core, relies on the \emph{deletion method} of R\"odl and Ruci\'nski.

In the remainder of this section we discuss two things: which property the bound \eqref{eq:delta_m0} implies, and how does such a property give the existence of the function $f_m$ and a suitable subset $F$ in \ref{prop:ii}? 

The intuition behind the assumption \eqref{eq:delta_m0} is best described on a simple example $\cH = \cT_n$. We naturally refer to the vertices of $\cT_n$ as edges, and hyperedges in $\cT_n$ as triangles. Recall that $N = \binom{n}{2}$ and $e(\cH) = \binom{n}{3}$. As mentioned earlier, $\Delta_\cH(m) = \Theta(\mu_\cH(m)^2 / m)$ holds for $m \ge N^{3/4}$, that is $m \ge n^{3/2}$. What is important about this value of $m$ is that every edge $e \in K_n$, with constant positive probability, forms a triangle with two edges from a randomly chosen $m$-element $I \subseteq E(K_n)$. Indeed, each edge $e$ belongs to $n-2$ different triangles, and the probability that in each such triangle we sample at most one other edge is at most 
$$
(1 - (m/N)^2)^{n-2},
$$
which is constant for $m \ge n^{3/2}$. One can apply a similar argument to conclude that in any $S \subseteq E(K_n)$ which spans at least $\eps n^3$ triangles, there are $\Theta_\eps(n^2)$ edges in $S$ which, with constant positive probability, form a triangle with two edges from a randomly chosen $m$-element $I' \subseteq S$. In other words, $\Theta_\eps(n^2)$ edges in $S$ form a triangle with some two edges from $I'$, in expectation. 

Let us now see how to use the described property. Instead of looking at the whole $I$ at once, we `reveal' it in pieces $I = I_1 \cup I_2 \cup \ldots$. From the previous discussion, we expect $\Theta(n^2)$ edges in $E(K_n)$ to form a triangle with some two edges from $I_1$, for a randomly chosen $I_1$ of size $\xi m > m_0$. Let us denote these edges with $\hat L_1$, and note that if $I$ is to be independent (that is, triangle-free) then $I \setminus I_1 \subseteq L_1$, where $L_1 = E(K_n) \setminus \hat L_1$. If $L_1$ spans less than $\eps n^3$ triangles, then we could set $f_m(I_1) = L_1$ and we are done. Otherwise, $L_1$ spans enough triangles, thus, again, we expect $\Theta_\eps(n^2)$ edges in $L_1$ to form a triangle with two edges from a randomly chosen $I_2 \subseteq L_1$ of size $|I_2| = \xi m$. Defining $L_2$ to be the set of edges in $L_1$ which do not form a triangle with two edges from $I_2$, we further have $I \setminus (I_1 \cup I_2) \subseteq L_2$, and so on. By repeating this argument constantly many rounds, we eventually arrive at a subset of edges $L_z$, for some constant $z$, which contains less than $\eps n^3$ triangles. Crucially, by taking $\xi$ to be sufficiently small (and thus $m \ge T N^{3/4}$ for $T$ sufficiently large), we can keep the total size of the revealed part of $I$ to be smaller than $\eps m$.

The actual proof proceeds in a somewhat different manner, but the described argument is implicitly present. The assumption \eqref{eq:delta_m0} is also used more directly, through Paley--Zygmund inequality, to deduce that there are many vertices in a $k$-graph $\cH$ which form a hyperedge with some $k-1$ vertices from a certain small subset. Finally, at this point we are not in a position to say more about where the probability $\eps^m$ comes from, other than it is obtained through an application of the \emph{deletion lemma} of R\"odl and R\'ucinski \cite{rodl1995threshold}.

\section{Proof of the probabilistic containers}

Given subsets $D, W \subseteq V(\cH)$ and $k' \in [k-1]$, let $e_{k'}(D, W)$ denote the number of edges in $\cH[D \cup W]$ which intersect $W$ in at least $k'$ vertices. Similarly, for a vertex $v \in D$ let $\deg_{k'}(v, D, W)$ denote the number of edges in $\cH[D \cup W]$ which contain $v$ and intersect $W \setminus \{v\}$ in at least $k'$ vertices.

\begin{definition} \label{def:saturating}
    Let $\cH$ be a $k$-graph and $k' \in [k-1]$. A subset $W \subseteq V(\cH)$ is \emph{$(k', \alpha, t)$-saturating} for $D \subseteq V(\cH)$, for some $\alpha, t > 0$, if
    \begin{equation} \label{eq:saturated_def}
        \left| \left\{ v \in D \colon \deg_{k'}(v, D, W) \ge t / N \right\} \right| \ge \alpha N.
    \end{equation}
\end{definition}

The following lemma is the main building block in the proof of Theorem \ref{thm:containers}. Note, it is crucial that $\alpha$ does not depend on $\xi$.

\begin{lemma} \label{lemma:saturating}
    For every $k \in \mathbb{N}$ and $\xi, \gamma, \beta, B > 0$, there exists $\alpha = \alpha(k, \beta, B), \lambda, T > 0$ such that the following holds. Let $\cH$ be a $k$-graph and suppose $m_0 \in \mathbb{N}$ $(m_0 < N)$ satisfies
    $$
        \Delta_{\cH}(m_0) \le B \mu_{\cH}^2(m_0)/m_0.
    $$
    Let $T m_0 \le m \le N$. Then there exists a family $\cB_m$ of $m$-element subsets of $V(\cH)$, $|\cB_m| < \beta^m \binom{N}{m}$, such that every $m$-element $I \subseteq V(\cH)$, $I \not \in \cB_m$, has the following property: There exists $X \subseteq I$, $|X| \le \beta m$, such that if some $I' \subseteq I \setminus X$,  $|I'| \ge m/2$, and $D \subseteq V(\cH)$ satisfy
    \begin{equation} \label{eq:many_edges}
        e_{k'}(D, I') \ge \gamma \cdot e(\cH) q^{k'},
    \end{equation}
    where $q = m / N$ and $k' \in [k-1]$, then $I'$ contains a $(k', \alpha, \lambda e(\cH) q^{k'})$-saturating set $W$ for $D$ of size $|W| \le \xi m$.
\end{lemma}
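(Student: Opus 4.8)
The plan is to split the argument into a probabilistic part---defining the small family $\cB_m^{k'}$ of atypical sets and, for each good $I$, a small ``heavy vertex'' set $X=X(I)$---and a deterministic part that, given a good $I$ and any admissible pair $(D,I')$, manufactures the saturating set $W$. Two harmless reductions come first. Since $D\cup I'$ (hence $e_{k'}(D,I')$) is unchanged if we replace $D$ by $D\setminus I'$, and since a $(k',\alpha,t)$-saturating set for $D\setminus I'$ is a fortiori one for $D$, I may assume $D\cap I'=\emptyset$. Also, the statement for a given $\xi$ follows from the statement for $\min\{\xi,\xi_0\}$ for any $\xi_0>0$, so I may assume $\xi\le\xi_0(k,\gamma)$ is as small as convenient; this is legitimate precisely because it does not affect $\alpha$ (only $\lambda$ and $T$).

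Step 1 (the bad family). Call $I$ \emph{good} if it satisfies a short list of first- and second-moment typicality estimates, roughly: (i) $e(\cH[I])\le 2\mu_\cH(m)$; (ii) the number of edges meeting $I$ in at least $k'$ vertices is $O_k(e(\cH)q^{k'})$; and (iii) an upper bound on $\sum_{v\in V(\cH)}\deg_{k'}(v,V(\cH),I)^2$, which controls how much ``co-degree mass'' can pile up on few vertices. Each of these fails with probability $e^{-\Omega(m)}$ once $m\ge Tm_0$: (i), (ii) are Janson/first-moment bounds, while (iii) is exactly where hypothesis \eqref{eq:delta_m0} is used, because $\sum_v\deg_{k'}(v,V(\cH),I)^2$ is (up to $k$-dependent constants) a sum of $(m/N)^{|e\cup e'|}$-type terms over ordered intersecting pairs of edges, i.e.\ it is governed by $\Delta_\cH$; the scaling $m\ge Tm_0$ converts \eqref{eq:delta_m0} into the bound needed. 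Choosing the constants and $T$ appropriately gives $|\cB_m^{k'}|<\beta^m\binom{N}{m}$. For good $I$, let $X=X(I)$ collect the vertices $v\in I$ whose $\deg_{k'}(v,V(\cH),I)$ exceeds a threshold chosen so that a counting argument against (ii) yields $|X|\le\beta m$.

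Step 2 (building $W$). Fix a good $I$, its set $X$, and any $I'\subseteq I\setminus X$ with $|I'|\ge m/2$ and any $D$ disjoint from $I'$ with $e_{k'}(D,I')\ge\gamma e(\cH)q^{k'}$. Take $W$ to be a random subset of $I'$, each vertex kept independently with probability $p\asymp\xi$. By Chernoff $|W|\le\xi m$ with very high probability; each edge counted by $e_{k'}(D,I')$ keeps at least $k'$ of its $I'$-vertices with probability $\Omega_k(p^{k'})$, so $\E[e_{k'}(D,W)]=\Omega_k(\gamma p^{k'}e(\cH)q^{k'})$, and since $e_{k'}(D,W)\le e(\cH)$ always, a reverse-Markov inequality shows this quantity is at least half its mean with positive probability. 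Moreover $\E[e(\cH[W])]\le p^k e(\cH[I])\le 2p^k\mu_\cH(m)$, which for $\xi\le\xi_0(k,\gamma)$ is a tiny fraction of $\gamma p^{k'}e(\cH)q^{k'}$, and (by Step 1(iii)) $\E\sum_v\deg_{k'}(v,D,W)^2$ is correspondingly small; so with positive probability $W$ simultaneously has $|W|\le\xi m$, carries $\Omega_k(\gamma p^{k'}e(\cH)q^{k'})$ counted edges, induces negligibly few edges itself, and has $\sum_v\deg_{k'}(v,D,W)^2$ within a constant of its expectation. Fix such a $W$.

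Step 3 (saturation by double counting) and the main obstacle. For this $W$ essentially all counted edges have a vertex in $D$ (the edges internal to $W$ being negligible), so $\sum_{v\in D}\deg_{k'}(v,D,W)\ge c_k\gamma p^{k'}e(\cH)q^{k'}=:S$; with the threshold $t/N$, $t=\lambda e(\cH)q^{k'}$ and $\lambda$ small, the sub-threshold vertices contribute at most $\tfrac12 S$, so the saturated vertices carry $\ge\tfrac12 S$, and Cauchy--Schwarz bounds their number below by $(\tfrac12 S)^2/\sum_v\deg_{k'}(v,D,W)^2$. The delicate point---and the main obstacle---is that the numerator scales like $p^{2k'}$, whereas a crude bound on the denominator loses only one power $p^{k'}$ (pairs of edges sharing their surviving $W$-vertices are sparsified once, not twice), so a naive estimate produces an $\alpha$ that degrades polynomially in $\xi$ (and possibly in $q=m/N$). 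Removing this loss is exactly what forces both the reduction to small $\xi$ (so the internal edges and various lower-order terms are genuinely negligible) and a sharp use of \eqref{eq:delta_m0} via Step 1(iii): the $\Delta_\cH$-bound is precisely the assertion that intersecting pairs of edges are not too numerous, which is what keeps the co-degree mass from concentrating on $o(N)$ vertices, and one must track the powers of $p$ on both sides of the Cauchy--Schwarz step so that they cancel \emph{uniformly in $\cH$}. If a single random sparsification does not suffice for this cancellation, the fallback is to build $W$ by iterating the sparsification through the scales $1,\dots,k'$ (or by a greedy choice of high-multiplicity vertices of $I'$), at the cost of a more involved but still $\xi$-free accounting; getting this bookkeeping right is the crux of the proof.
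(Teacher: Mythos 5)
Your architecture matches the paper's (a small bad family plus a deleted set $X$, then random sparsification of $I'$ and a second-moment/Paley--Zygmund argument), and you correctly identify the crux: $\alpha$ must not degrade in $\xi$. But there are three concrete gaps. First, Step 1 claims the typicality conditions (ii) and (iii) each fail with probability $e^{-\Omega(m)}$ by ``Janson/first-moment bounds''. Janson controls lower tails only, Markov gives only constant failure probability, and the upper tail of such counts simply does not decay exponentially in $m$ in general (e.g.\ for $k=2$, $k'=1$ and $\cH$ a star, a single unlucky vertex of $I$ inflates the count, an event of probability $m/N \gg e^{-\Omega(m)}$). This is exactly what the R\"odl--Ruci\'nski deletion method is for: outside a $\beta^m$-fraction of $m$-sets one can delete $X$ with $|X|\le\beta m$ so that, for the family $\cS_s$ of $s$-element unions of two edges, $|\cS_s(I\setminus X,s')|\le T|\cS_s|q^{s'}$ for \emph{all} $s\in[k,2k]$ and $s'\le s$. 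Your $X$ of heavy vertices is in this spirit, but its size bound rests on (ii), whose claimed probability is the thing that fails; and the quantity you actually need downstream is the full array of pair-of-edge counts relative to $I\setminus X$ (so that they are inherited by every $W\subseteq I\setminus X$), not a single degree-square sum relative to $I$.

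Second, in Step 2 the reverse-Markov bound against the trivial ceiling $e_{k'}(D,W)\le e(\cH)$ gives $\Pr[e_{k'}(D,W)\ge\tfrac12\E e_{k'}(D,W)]\gtrsim\gamma(\xi q)^{k'}$, which tends to $0$; you cannot intersect an event of that probability with the other required events (each holding with probability, say, $4/5$). The paper instead bounds $\Var[e_{k'}(D,W)]$ by the number of intersecting pairs of edges both meeting $W$ heavily, controls that via the deleted counts $|\cS_s(I\setminus X,\cdot)|$ and the hypothesis on $\Delta_\cH$, and applies Chebyshev. Third, Step 3 is left genuinely open: you flag that a crude bound loses powers of $\xi$ and offer an unspecified iteration as a fallback. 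The resolution is a computation, not an iteration: a pair of edges contributing to $\sum_v\deg_{k'}^2(v,D,W)$ with $|e\cup e'|=s$ places at least $\ell'(s)=s+2k'+1-2k$ vertices in $W$, so survives the $\xi$-sparsification with probability $O_k(\xi^{\ell'(s)})$; summing $|\cS_s|(q\xi)^{\ell'(s)}$ over $s$ gives $O_k\bigl((q\xi)^{2k'+1-2k}\Delta_\cH(\xi m)\bigr)$, and applying the hypothesis \emph{at scale $\xi m$} (valid since $\xi m\ge m_0$ once $T$ is large in terms of $\xi$ --- this is why $T$, but not $\alpha$, may depend on $\xi$) yields $O_k\bigl((e(\cH)(q\xi)^{k'})^2/N\bigr)$, so the powers of $\xi$ cancel exactly against $\E[\deg_{k'}(v,D,W)]^2$ and Paley--Zygmund gives a $\xi$-free $\alpha$. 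Without this accounting the lemma's key feature is not established.
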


Let us give a brief intuition behind the statement of Lemma \ref{lemma:saturating}. First, if $D = V(\cH)$ and $I'$ is chosen uniformly at random, then $e(\cH) q^{k'}$ roughly corresponds to the expected number of edges which intersect $I'$ in at least $k'$ elements. If \eqref{eq:many_edges} holds and the edges intersecting $I'$ are evenly distributed, then we expect many vertices $v \in D$ to satisfy the degree condition in \eqref{eq:saturated_def} with $t = \Omega(e(\cH) q^{k'})$ and $I'$ having the role of $W$. By choosing $W \subseteq I'$ uniformly at random we can hope that it satisfies a scaled down version of \eqref{eq:many_edges} and inherits the distribution of the edges intersecting it. Consequently, $W$ is saturating for $D$. The key part of the lemma is that properties which are sufficient to guarantee such a distribution of edges hold for all but at most $\beta^m$-fraction of $m$-element subsets. The proof is based on the R\"odl--Ruci\'nski deletion method, and we postpone it for the next section. Instead, we now prove Theorem \ref{thm:containers}.

\begin{proof}[Proof of Theorem \ref{thm:containers}]
    Let us start by fixing the constants. Set $\beta = \eps / (2k^2)$, $\lambda_0 = \eps$, and for each $k' = 1, \ldots, k-1$, iteratively, set
    $$
        \gamma_{k'} = \lambda_{k'-1} \eps / (2k^2), \;
        \alpha_{k'} = \alpha_{\ref{lemma:saturating}}(k, \beta, B), \;
        \xi_{k'} = \alpha_{k'} \eps / (4k^2), \;
        \lambda_{k'} = \lambda_{\ref{lemma:saturating}}(k, \xi_{k'}, \gamma_{k'}, \beta, B). \;
    $$
    Note that it is crucial here that $\alpha$ in Lemma \ref{lemma:saturating} does not depend on $\xi$, as otherwise we would get a circular dependency. Finally, set $T = \max_{k' \in [k-1]} T_{\ref{lemma:saturating}}(k, \xi_{k'}, \gamma_{k'}, \beta, B)$. We prove the statement for $\gamma = \gamma_k = \lambda_{k-1} \eps / (2k^2)$. Throughout the proof we use $q = m / N$.
    
    For each $k' \in [k-1]$, let $\cB_m^{k'}$ be the family of $m$-element subsets given by Lemma \ref{lemma:saturating} for $\xi_{k'}$ (as $\xi$), $\gamma_{k'}$ (as $\gamma$), $\beta$ and $B$. Take $\cB_m$ to be the collection of all the sets from these families, thus $|\cB_m| < \eps^m \binom{N}{m}$. We show that every $m$-element $I \not \in \cB_m$ such that $e(I) < \gamma \cdot \mu_\cH(m)$ satisfies \ref{prop:ii}.
    
    \paragraph{Function $f_m$.} Given $\mathbf{F} = (F_1, \ldots, F_{k-1}) \in \cP(V(\cH))^{k-1}$, set $D_k := V(\cH)$ and iteratively define $D_{k'}$ for $k' = k-1, \ldots, 0$ as follows:
    \begin{equation} \label{eq:D_k}
        D_{k'} = \left\{v \in D_{k' + 1} \colon \deg_{k'}(v, D_{k' + 1}, F_{k'}) <  t_{k'} / N  \right\},
    \end{equation}
    where
    $$
        t_{k'} = \lambda_{k'} \cdot e(\cH) q^{k'}.
    $$
    In the case $k'=0$ we slightly abuse the notation by using  non-defined $F_0$, which is actually irrelevant for the definition of $D_0$. Finally, we set $f_m(\mathbf{F}) = D_0$. Note that $e(D_0) \le \lambda_0 e(\cH)$ by the definition of $t_0$, and furthermore $\lambda_0 e(\cH) = \eps e(\cH)$ by the choice of $\lambda_0$. Therefore, we have $f_m(\mathbf{F}) \in \cI_\eps(\cH)$, as required.
     
    \paragraph{Finding $\mathbf{F}$.} Consider some $m$-element $I \not \in \cB_m$, and suppose \ref{prop:i} does not hold. Let $X \subseteq I$ be the union of the sets $X^{k'}$ promised by Lemma \ref{lemma:saturating} for $k' \in [k-1]$, with parameters associated to each $k'$ as stated above. Then $|X| < \eps m/ (2k)$ by the choice of $\beta$. Set $I' = I \setminus X$. We find $F_1, \ldots, F_{k-1} \subseteq I'$ using the following algorithm:
    \begin{itemize}
        \item Set $D_k = V(\cH)$.
        \item For $k' = k-1, \ldots, 1$:
        \begin{enumerate}
            \item Initially set $F_{k'} = \emptyset$ and $D_{k'} = D_{k'+1}$.
            \item While $e_{k'}(D_{k'}, I') \ge \gamma_{k'} \cdot e(\cH) q^{k'}$: 
            \begin{enumerate}[(a)]
                \item Let $W \subseteq I'$ be a $(k', \alpha_{k'}, t_{k'})$-saturating set for $D_{k'}$ of size $|W| \le \xi_{k'} m$ \\ (guaranteed to exists by Lemma \ref{lemma:saturating}). \label{alg:W}
                \item Update $F_{k'} = F_{k'} \cup W$.
                \item Set $D_{k'}$ to be as defined in \eqref{eq:D_k} with respect to the new set $F_{k'}$.
            \end{enumerate}
        \end{enumerate}
    \end{itemize}
    
    
    
    Consider one particular $k' \in \{k-1, \ldots, 1\}$. By the choice of $W$ in \ref{alg:W}, in each iteration of the while loop the set $D_{k'}$ decreases by at least $\alpha_{k'} N$. Therefore, after at most $\lceil 1 / \alpha_{k'} \rceil$ iterations we obtain a set $F_{k'}$ of size at most $\eps m / (2k)$ (owing to the choice of $\xi_{k'}$) such that the set $D_{k'}$ satisfies 
    \begin{equation} \label{eq:D_k_upper}
         e_{k'}(D_{k'}, I') < \gamma_{k'} \cdot e(\cH) q^{k'}.
    \end{equation}
    As $I$ does not satisfy \ref{prop:i} we also have $e_k(D_k, I') = e(I') < \gamma_k \cdot e(\cH) q^k$.    
    
    It remains to show that $D_0$, which is equal to $f_m(\mathbf{F})$ for $\mathbf{F} = (F_1, \ldots, F_{k-1})$, contains almost all the elements from $I'$. To this end, let $R \subseteq I'$ denote the set of all elements which are not contained in $D_0$. Suppose, towards a contradiction, that $|R| \ge \eps m / 2$. Then for some $k' \in \{0, \ldots, k-1\}$ we have $|R_{k'}| \ge \eps m / (2k)$, where $R_{k'} = R \cap (D_{k'+1} \setminus D_{k'})$. This implies
    \begin{align*}
        e_{k'+1}(D_{k'+1}, I') &\ge \frac{1}{k} \sum_{v \in R_{k'}} \deg_{k'}(v, D_{k'+1}, I') \ge \frac{1}{k} \sum_{v \in R_{k'}} \deg_{k'}(v, D_{k'+1}, F_{k'}) \\
        &\ge \frac{\eps m}{2k^2} \cdot \frac{t_{k'}}{N} = \gamma_{k'+1} e(\cH) q^{k'+1},
    \end{align*}
    which contradicts \eqref{eq:D_k_upper}. Therefore, $|R| < \eps m  / 2$. All together, the set $F = X \cup R \cup F_1 \cup \ldots \cup F_{k-1}$ is of size $|F| \le \eps m$ and $I \subseteq F \cup f_m(\mathbf{F})$. By adding arbitrary elements to $F$ if needed, we can assume $|F| = \eps m$.
\end{proof}

\section{Saturating sets} \label{sec:second_moment}


The following lemma captures the main technical property used in the proof of Lemma \ref{lemma:saturating}.

\begin{definition}
Let $\cH$ be a $k$-graph. Given a subset $Q \subseteq V(\cH)$, $x \in \mathbb{N}_0$ and $\xi > 0$, set
$$
    \Delta_{\cH}^Q(\xi, x) = \sum_{(e, e') \in \Lambda_{\cH}} Q_x(e \cup e') \xi^{|e \cup e'| - x},
$$
where
$$
    Q_x(S) = \begin{cases}
    1, &\text{if } |S \setminus Q| \le x \\
    0, &\text{otherwise.}
    \end{cases}
$$
\end{definition}

Note that $Q_0(S)$ is the indicator function for $S \subseteq Q$.

\begin{lemma} \label{lemma:Lambda}
    For every $k \in \mathbb{N}$ and $\beta > 0$, there exists $K > 1$ such that the following holds. Let $\cH$ be a $k$-graph and $m \in \mathbb{N}$ $(m < N)$. Then an $m$-element subset $I \subseteq V(\cH)$, chosen uniformly at random, with probability at least $1 - \beta^m$ contains $X \subseteq I$, $|X| \le \beta m$, such that,   for every integer $x \in [0, 2k-1]$ and every $\xi > 0$, we have
    \begin{equation} \label{eq:Delta_xi_x}
        \Delta_{\cH}^{I \setminus X}(\xi, x) \le K \Delta_{\cH}(\xi m) \cdot \left( \xi m / N \right)^{-x}.
    \end{equation}
\end{lemma}

We postpone the proof of Lemma \ref{lemma:Lambda}, and instead derive Lemma \ref{lemma:saturating} first.

\begin{proof}[Proof of Lemma \ref{lemma:saturating}]
    Let $\cB_m$ be the family of `bad' $m$-element subsets of $V(\cH)$ implied by Lemma \ref{lemma:Lambda}. We show that an $m$-element subset $I \subseteq V(\cH)$, $I \not \in \cB_m$, satisfies the property of the lemma.
    
    Let $X \subseteq I$ be a subset of size $|X| \le \beta m$, promised by Lemma \ref{lemma:Lambda}. Let $k' \in [k-1]$, and suppose $e_{k'}(D, I') \ge \gamma e(\cH) q^{k'}$ for some $I' \subseteq I \setminus X$, $|I'| \ge m/2$, and $D \subseteq V(\cH)$ (recall $q = m / N$).
    
    \begin{claim} \label{claim:W}
    There exist a subset $W \subseteq I'$ with the following properties:
    \begin{enumerate}[(a)]
        \item $|W| \le \xi m$, \label{prop:Wa}
        \item $e_{k'}(D, W) \ge \sigma \cdot \gamma e(\cH) (q \xi)^{k'}$, and \label{prop:Wb}
        \item $\sum_{v \in V(\cH)} \deg_{k'}^2(v, W) \le N \cdot Z \left( \gamma e(\cH) (q \xi)^{k'} / N \right)^2$, \label{prop:Wc}
    \end{enumerate}
    where $\sigma, Z > 0$ are constants which do not depend on $\xi$ and $\gamma$, and $\deg_{k'}(v, W)$ denotes the number of edges in $\cH$ which contain $v$ and at least $k'$ vertices from $W \setminus \{v\}$.
    \end{claim}
    
    Suppose Claim \ref{claim:W} holds, and let $W \subseteq I'$ be a promised subset. We show that $W$ is saturating for $D$. Consider $v \in V(\cH)$ chosen uniformly at random and let $Y = \deg_{k'}(v, D, W)$ (for $v \not \in D$ we have $\deg_{k'}(v, D, W) = 0$). Then $\mathbb{E}_v[Y] \ge e_{k'}(D, W) / N$. As $\deg_{k'}(v, D, W) \le \deg_{k'}(v, W)$, from \ref{prop:Wb} and \ref{prop:Wc} we conclude 
    $$
        \mathbb{E}_v[Y^2] \le \mathbb{E}_v[Y]^2 Z / \sigma^2.
    $$
    Applying the Paley--Zygmund inequality, we get
    $$
        \textrm{Pr}_v \left( Y \ge \mathbb{E}_v[Y]/2 \right) \ge \frac{\mathbb{E}_v[Y]^2}{4 \mathbb{E}_v[Y^2]} \ge \sigma^2 /(4 Z) =: \alpha.
    $$
    In other words, for at least $\alpha N$ elements $v \in V(\cH)$ we have 
    $$
        \deg_{k'}(v, D, W) \ge \lambda e(\cH) q^{k'} / N,
    $$
    where $\lambda = \sigma \gamma \xi^{k'} / 2$, hence $W$ is $(k',\alpha,\lambda e(\cH)q^{k'})$-saturating for $D$. This finishes the proof of Lemma \ref{lemma:saturating}, pending on the proof of Claim \ref{claim:W}.
\end{proof}
    
\begin{proof}[Proof of Claim \ref{claim:W}]
    Throughout the proof, we rely on the fact that for $m \ge m_0$ we have $\Delta_\cH(m) \le (m/m_0)^{2k-1} \Delta_{\cH}(m_0)$ and, consequently, $\Delta_\cH(m) \le B \mu_{\cH}(m)^2 / m$ by the assumption of Lemma \ref{lemma:Lambda}.
    
    We prove the existence of a desired subset $W \subseteq I'$ using the probabilistic method. In particular, we show that a subset $W \subseteq I'$ formed by taking each element in $I'$ with probability $\xi/2$, independently of all other elements, satisfies \ref{prop:Wa}--\ref{prop:Wc} (simultaneously) with positive probability. For start, we have $\Pr[\ref{prop:Wa}] \ge 1/2$ by Markov's inequality. Using Paley--Zygmund inequality we next show $\Pr[\ref{prop:Wb}] \ge 4/5$, and then, again, using Markov's inequality, $\Pr[\ref{prop:Wc}] \ge 4/5$.
    
    Let $L = e_{k'}(D, W)$, and for each $e \in \cH$ let $L_e$ be an indicator variable for the event $|e \cap W| \ge k'$. Then $L = \sum_{e \in \cH[D \cup I']} X_e$, thus by the linearity of expectation
    $$
        \mathbb{E}[L] \ge e_{k'}(D, I') \cdot (\xi/2)^{k'} >\gamma e(\cH) (q \xi)^{k'} / 2^k,
    $$
    where the first inequality follows from a fact that for every edge $e$ such that $|e \cap I'| < k'$ we deterministically have $L_e = 0$. On the one hand, this implies that if $|e \cap I'| < k$ then $L_e$ is independent of all other variables. On the other hand, if some variables $L_e$ and $L_e'$ are not independent then we necessarily have $|e \cap I'| \ge k$ and $|e' \cap I'| \ge k$, thus
    \begin{equation} \label{eq:e2kk}
        |(e \cup e') \setminus I'| \le |e \setminus I'| + |e' \setminus I'| \le 2(k - k').
    \end{equation}
    We now estimate $\mathbb{E}[L^2]$ as follows:
    \begin{align*}
        \mathbb{E}[L^2] &\le \mathbb{E}[L]^2 + \sum_{e \sim e'} \mathbb{E}[L_e L_{e'}] \le \mathbb{E}[L]^2 + \sum_{e \sim e'} 1 \\ 
        &\stackrel{\eqref{eq:e2kk}}{\le} \mathbb{E}[L]^2 + \sum_{(e, e') \in \Lambda_\cH} Q_{2k-2k'}(e \cup e') = \mathbb{E}[L]^2 + \Delta_{\cH}^{I\setminus X}(1, 2k - 2k'),
    \end{align*}
    where the first two sums go over (ordered) pairs of edges such that $L_e$ and $L_{e'}$ are not independent. As $I \setminus X$ satisfies the property of Lemma \ref{lemma:Lambda}, for $K = K_{\ref{lemma:Lambda}}(k, \beta)$ we have
    $$
        \Delta_{\cH}^{I \setminus X}(1, 2k-2k') \stackrel{\eqref{eq:Delta_xi_x}}{\le} K \Delta_{\cH}(m) \cdot q^{-2k + 2k'} \le K ( B \mu_\cH(m)^2 / m) \cdot q^{-2k + 2k'} \le \frac{2^{2k} K B}{\xi^{2k} \gamma^2 m} \cdot \mathbb{E}[L]^2.
    $$
    Finally, for $\sigma = 1/(2^k \cdot 10)$ in \ref{prop:Wb}, Paley--Zygmund\footnote{Chebyshev's inequality would give a better probability, but for our purposes it is not needed.} inequality gives the desired probability:
    $$
        \Pr[\ref{prop:Wb}] \ge \Pr[L \ge \mathbb{E}[L]/10] \ge 0.81 \cdot \frac{\mathbb{E}[L]^2}{\mathbb{E}[L^2]} > 4/5,
    $$
    for sufficiently large $m \ge T m_0$ (which can be achieved by taking large enough constant $T$).
    
    Proof of $\Pr[\ref{prop:Wc}] > 4/5$ proceeds similarly. Let $S = \sum_{v \in V(\cH)} \deg_{k'}(v, W)^2$. For a vertex $v \in V(\cH)$ and an edge $e \in \cH$ with $v \in e$, let $L_e^v$ be the indicator random variable for the event $|e \cap (W \setminus \{v\})| \ge k'$. Then
    $$
       S = \sum_{v \in \cH} \sum_{\substack{(e, e') \in \Lambda_\cH \\ v \in e \cap e'}} L_e^v L_{e'}^v.
    $$
    Note that $L_e^v L_{e'}^v = 1$ implies
    \begin{equation} \label{eq:W_x}
       |(e \cup e') \setminus W| \le 2k - 2k' - 1 =: x,
    \end{equation}
    thus
    $$
        \mathbb{E}[L_e^v L_{e'}^v] < 2^{2k} (\xi/2)^{|e \cup e'| - x}.
    $$
    Similarly, if $\mathbb{E}[L_e^v L_{e'}^v = 1] > 0$ then $|(e \cup e') \setminus I'| \le x$. Putting all together, by the linearity of expectation we have
    \begin{align*}
        \mathbb{E}[S] &= \sum_{v \in \cH} \sum_{\substack{(e, e') \in \Lambda_\cH\\v \in e \cap e'}} \mathbb{E}[L_e^v L_{e'}^v] \le \sum_{v \in \cH} \sum_{\substack{(e, e') \in \Lambda_\cH\\v \in e \cap e'}} I'_x(e \cup e') \cdot 2^{2k} (\xi/2)^{|e \cup e'| - x} \\
        &< 2k \sum_{(e, e') \in \Lambda_{\cH}} I'_x(e \cup e') \cdot 2^{2k}(\xi/2)^{|e \cup e'| - x} = 2k  2^k \cdot \Delta_{\cH}^{I'}(\xi/2, x).
    \end{align*}
    The factor $2k$ in the penultimate inequality corresponds to the fact that for each $(e, e') \in \Lambda_{\cH}$ we can choose $v \in e \cap e'$ in less than $2k$ ways. From $I' \subseteq I \setminus X$, the assumption that $I \setminus X$ satisfies the property of Lemma \ref{lemma:Lambda}, and $\xi m / 2 \ge m_0$ (which holds for $T$ sufficiently large), we further get
    \begin{align*}
        \mathbb{E}[S] &\le  2k  2^{2k} \Delta_{\cH}^{I \setminus X}(\xi/2, x) \stackrel{\eqref{eq:Delta_xi_x}}{\le} 2k 2^{2k} \cdot K \Delta_{\cH}(\xi m / 2) \cdot (\xi q / 2)^{-x} \\
        &\le 2k2^{2k} K \cdot \frac{B \mu_\cH(\xi m / 2)^2}{\xi m / 2} (\xi q / 2)^{-x} \le Z' \cdot (e(\cH) (\xi q)^{k'})^2 / N,
    \end{align*}
    where $Z' = 2k2^{2k} BK$. Finally, taking $Z = 5Z'$ in \ref{prop:Wc}, Markov's inequality gives the desired probability:
    $$
        \Pr[\ref{prop:Wc}] \ge \Pr[S \le 5\mathbb{E}[S]] \ge 4/5.
    $$
\end{proof}

The proof of Lemma \ref{lemma:Lambda} relies on \emph{deletion lemma} of R\"odl and Ruci\'nski. The lemma originally appears in \cite[Lemma 4]{rodl1995threshold} and was instrumental in a breakthrough by Schacht \cite{schacht2016extremal}. 

\begin{lemma}[Lemma A.1 in \cite{nenadov2021klr}] \label{lemma:deletion_general}
    Let $V$ be a set with $N$ elements and let $\cS$ be a family of $s$-element subsets of $V$. For every $\beta > 0$ there exists $K > 1$, such that an $m$-element subset $I \subseteq V$ $(1 \le m \le N)$, chosen uniformly at random, with probability at least $1 - \beta^m$ has the following property: There exists $X \subseteq I$ of size $|X| \le \beta m$ such that $I \setminus X$ contains at most
    $$
        K \cdot |\cS| (m / N)^{s}
    $$
    sets from $\cS$.
\end{lemma}
Note that \cite[Lemma A.1]{nenadov2021klr} requires $m < N/4$. For $m \ge N/4$ the conclusion trivially holds for $K = 4^s$, thus we may assume it holds in the whole range.

\begin{proof}[Proof of Lemma \ref{lemma:Lambda}]
Set $q = m / N$. For an integer $u \in [k, 2k-1]$, let
$$
    \Lambda_{\cH}(u) = \{ (e, e') \in \Lambda_{\cH} \colon |e \cup e'| = u \},
$$
and rewrite $\Delta_{\cH}(y)$ as
\begin{equation} \label{eq:Delta_rewrite}
    \Delta_\cH(y) = \sum_{u = k}^{2k-1} |\Lambda_{\cH}(u)| (y/N)^u.
\end{equation}
For each $u \in [k, 2k-1]$ and $s \in [u]$, define a family $\cS^s_u$ as follows:
$$
    \cS^{s}_u = \left\{ S \subseteq e \cup e' \colon (e, e') \in \Lambda_{\cH}(u) \; \text{ and } \; |S| = s \right\}.
$$
It is important to note an $s$-subset $S$ appears in $\cS_u^s$ with multiplicity $c(S) \ge 0$, where $c(S)$ counts the number of distinct pairs $(e, e') \in \Lambda_{\cH}(u)$ with $S \subseteq e \cup e'$. We make use of $|\cS_u^s| \le 2^{2k} |\Lambda_{\cH}(u)|$.

By Lemma \ref{lemma:deletion_general} (applied with $\beta/k^2$ as $\beta$) and union-bound, an $m$-element $I \subseteq V(\cH)$, chosen uniformly at random, with probability at least $1 - \beta^m$ contains a subset $X \subseteq I$ of size $|X| \le \beta m$ such that, for each $u \in [k, 2k-1]$ and $s \in [u]$, $Q = I \setminus X$ satisfies
$$
    \sum_{S \in \cS_s^u} Q(S) \le K' \cdot |\cS_s^u| q^{s}
$$
where $K' = \max_{s} K_{\ref{lemma:deletion_general}}(\beta/k^2, s)$ and $Q(S) = Q_0(S)$ denotes the indicator function for $S \subseteq Q$. For integers $u \in [k, 2k-1]$ and $x \in [0, u-1]$, we then have
$$
    \sum_{(e, e') \in \Lambda_\cH(u)} Q_x(e \cup e') \le \sum_{s = u-x}^u \sum_{S \in \cS_s^u} Q(S) \le K |\Lambda_\cH(u)| q^{u-x},
$$
where $K = 2k \cdot K' \cdot 2^{2k}$. For $x \ge u$, we use the following trivial bound:
$$
     \sum_{(e, e') \in \Lambda_\cH(u)} Q_x(e \cup e') =  \sum_{(e, e') \in \Lambda_\cH(u)} 1 \le |\Lambda_{\cH}(u)| \le |\Lambda_{\cH}(u)| q^{u-x}.
$$
Putting all together, for an integer $x \in [0, 2k-1]$ and $\xi > 0$ we have
\begin{align*}
    \Delta_\cH^Q(\xi, x) = \sum_{u=k}^{2k-1} \sum_{(e, e') \in \Lambda_{\cH}(u)} Q_x(e \cup e') \xi^{u - x} \le K \sum_{u=k}^{2k-1} |\Lambda_\cH(u)| (\xi q)^{u - x} \stackrel{\eqref{eq:Delta_rewrite}}{=} K (\xi q)^{-x} \Delta_{\cH}(\xi m).
\end{align*}
\end{proof}


\section{Application: Transference for removal lemmas}
\label{sec:application}

Hypergraph removal lemma states that for every $\alpha > 0$ there exists $\gamma > 0$ such that any $k$-graph with $n$ vertices and at most $\gamma n^{v(H)}$ copies of some fixed $k$-graph $H$ can be made $H$-free by removing at most $\alpha n^k$ edges. This innocent looking statement has striking implications, the most prominent being Szemer\'edi's theorem \cite{szemeredi1975sets} on arithmetic progressions in dense sets of natural numbers. For the history of the lemma, its applications and recent developments, we refer the reader to the survey by Conlon and Fox \cite{conlon13removalsurvey}. 

As an application of Theorem \ref{thm:containers} and, more importantnly, a demonstration that it is not hindered by the existence of a small number of subsets for which neither \ref{prop:i} nor \ref{prop:ii} apply, we prove the following version of a removal lemma for random hypergraphs. To state it concisely, let us denote with $\Gknp$ a subgraph of $\cK_n^{(k)}$, the complete $k$-graph with $n$ vertices, obtained by taking each hyperedge with probability $p$, independently of all other edges, for some $p \in (0, 1]$.

\begin{theorem} \label{thm:random_removal}
    For any $k$-graph $H$ and $\alpha > 0$ there exists $C, \gamma > 0$ such that if $p \ge Cn^{-1/m_k(H)}$, where
   $$
        m_k(H) = \max\left\{ \frac{e(H') - 1}{v(H') - k} \colon H' \subseteq H, v(H') > k \right\},
    $$
    then with high probability $\Gamma = \Gknp$ has the following property: Every subgraph $\Gamma' \subseteq \Gamma$ with at most $\gamma n^{v(H)} p^{e(H)}$ copies of $H$ can be made $H$-free by removing at most $\alpha n^{k} p$ edges.
\end{theorem}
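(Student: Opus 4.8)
The plan is to cast $H$-freeness as independence in an auxiliary hypergraph and apply Theorem~\ref{thm:containers}. Let $\cH$ be the $e(H)$-uniform hypergraph with vertex set $V(\cH)=E(\cK_n^{(k)})$, so that $N=\binom{n}{k}$, and with one edge for each copy of $H$ in $\cK_n^{(k)}$; then $e(\cH)=\Theta_H(n^{v(H)})$, an independent set of $\cH$ is exactly an $H$-free subgraph of $\cK_n^{(k)}$, and a set $S\in\cI_\eps(\cH)$ is a $k$-graph on $[n]$ with at most $\eps e(\cH)$ copies of $H$. By the hypergraph removal lemma, if $\eps=\eps(\alpha,H)$ is small enough, every such $S$ has a set $C_S\subseteq S$ of at most $(\alpha/8)n^k$ edges meeting all copies of $H$ in $S$; fix one such $C_S$ for each $S\in\cI_\eps(\cH)$. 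A routine computation -- grouping ordered pairs of intersecting copies of $H$ by their overlap $J\subseteq H$ -- gives
$$
\Delta_\cH(m)\,\big/\,\big(\mu_\cH^2(m)/m\big)\;=\;\Theta_H\!\Big(\sum_{J\subseteq H,\;e(J)\ge 1} n^{k-v(J)}(m/N)^{-(e(J)-1)}\Big),
$$
which is $O_H(1)$ exactly when $m/N\gtrsim n^{-1/m_k(H)}$, since $e(J)\ge 2$ forces $v(J)>k$ and $(e(J)-1)/(v(J)-k)\le m_k(H)$. Hence, once $C=C(\alpha,H)$ is large, $m_0:=\lceil\alpha pN/T\rceil$ satisfies \eqref{eq:delta_m0} with a constant $B=B(H)$, where $T$ (and $\bar\gamma$, renaming the $\gamma$) are the outputs of Theorem~\ref{thm:containers} applied with this $B$, with ``$k$'' $=e(H)$, and with this $\eps$; let $f_m$ and $\cA_m$ (the at most $\eps^m\binom{N}{m}$ many $m$-sets failing both \ref{prop:i} and \ref{prop:ii}) be the associated objects. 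We may assume $pn^k\to\infty$ -- equivalently, that $H$ is not a matching of $k$-edges; in that excluded case the hypothesis already forces $\Gamma'$ to be $H$-free for $\gamma$ small.

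Choose $\gamma:=c_1\bar\gamma$ with $c_1=c_1(\alpha,H)$ small enough that $\gamma\,n^{v(H)}p^{e(H)}\le \bar\gamma\,e(\cH)(m/N)^{e(H)}$ for all $m\ge Tm_0$, which is possible because $m/N\ge Tm_0/N\ge\alpha p$. Then any subgraph $\Gamma'$ of $\Gamma=\Gknp$ with at most $\gamma\,n^{v(H)}p^{e(H)}$ copies of $H$ and $m:=e(\Gamma')\ge Tm_0$ fails \ref{prop:i} for $I:=E(\Gamma')$, so either $I\in\cA_m$ or \ref{prop:ii} holds, giving $F\subseteq I$ with $|F|=\eps m$ and $\mathbf F\in\cT_{e(H)-1}(F)$ with $I\subseteq F\cup D$, $D:=f_m(\mathbf F)\in\cI_\eps(\cH)$. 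In the second case, deleting from $\Gamma'$ the edges of $F\cup(C_D\cap E(\Gamma'))$ destroys every copy of $H$ -- a copy avoiding $F$ lies entirely in the graph $D$, hence contains an edge of $C_D$, which is then one of its own edges -- at a cost of at most $\eps m+|C_D\cap E(\Gamma)|$ edges; and subgraphs with $e(\Gamma')<Tm_0\le\alpha pn^k$ are made $H$-free by removing all their edges. So the theorem follows once we show that, conditioned on the whp event $e(\Gamma)\le 2p\binom{n}{k}$, with high probability (i) no $I\in\cA_m$ with $Tm_0\le m\le 2p\binom{n}{k}$ has $I\subseteq E(\Gamma)$, and (ii) there are no $m$ in that range, $F\subseteq E(\Gamma)$ with $|F|=\eps m$, and $\mathbf F\in\cT_{e(H)-1}(F)$ with $|C_{f_m(\mathbf F)}\cap E(\Gamma)|>(3\alpha/4)pn^k$ -- because if both fail, then $\eps m\le(\alpha/4)pn^k$ (as $\eps\le\alpha/8$) makes the cost above at most $\alpha pn^k$.

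Event (i) is harmless: since $\binom{N}{m}p^m\le(eNp/m)^m\le(e/\alpha)^m$ for $m\ge Tm_0=\alpha pN$, the union bound gives $\Pr[(i)]\le\sum_m\eps^m\binom{N}{m}p^m\le\sum_{m\ge\alpha pN}(e\eps/\alpha)^m=o(1)$ once $\eps\le\alpha/(2e)$. Event (ii) is the crux. The key point is that $C_{f_m(\mathbf F)}$ is a deterministic function of $(m,\mathbf F)$ alone, so conditioning on $\{F\subseteq E(\Gamma)\}$ (of probability $p^{|F|}$) leaves $E(\Gamma)\setminus F$ a fresh binomial random set; since $|C_{f_m(\mathbf F)}|\le(\alpha/8)n^k$ and $|C_{f_m(\mathbf F)}\cap F|\le|F|\le(\alpha/4)pn^k$, Chernoff's bound yields $\Pr[\,|C_{f_m(\mathbf F)}\cap E(\Gamma)|>(3\alpha/4)pn^k\mid F\subseteq E(\Gamma)\,]\le e^{-\Omega(\alpha pn^k)}$. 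Summing over $m$, then over $F$ with $|F|=\eps m$ weighted by $\Pr[F\subseteq E(\Gamma)]=p^{|F|}$ (and over $\mathbf F$), and using $\binom{N}{\eps m}p^{\eps m}\le(e/(\eps\alpha))^{\eps m}$,
$$
\Pr[(ii)]\;\le\;e^{-\Omega(\alpha pn^k)}\sum_m\binom{N}{\eps m}\big(2^{e(H)-1}p\big)^{\eps m}\;\le\;e^{-\Omega(\alpha pn^k)}\cdot e^{\,O_H(\eps\ln(1/(\eps\alpha))\,pn^k)},
$$
which is $o(1)$ provided $\eps=\eps(\alpha,H)$ is small enough that the entropy exponent $O_H(\eps\ln(1/(\eps\alpha)))$ falls below the Chernoff rate (it is then also small enough for the removal lemma, and we may assume $\eps\le\alpha/8$ and $\eps\le\alpha/(2e)$); finally $C=C(\alpha,H)$ is taken large enough for all the constraints on $m_0$.

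The main obstacle is precisely this bound on (ii): the a priori family of containers has size $e^{\Theta(n^k)}$, far too large for a naive union bound, so one must restrict the fingerprint $F$ to lie in $E(\Gamma)$ -- which, through the $p^{|F|}$ weighting, turns $\binom{N}{\eps m}$ into $(e/(\eps\alpha))^{\eps m}$ with no $\log n$ loss -- and exploit that the removal-lemma cover $C_{f_m(\mathbf F)}$ depends only on $F$ and $\mathbf F$, so that conditioning on $\{F\subseteq E(\Gamma)\}$ retains enough independence for the Chernoff estimate. It is essential that $\eps$ can be sent to $0$ while the deletion budget stays a fixed fraction of $\alpha$, so that the entropy of enumerating fingerprints is dominated by the Chernoff gain; and event (i) is the concrete illustration that the $\eps^m$-fraction of $m$-sets on which Theorem~\ref{thm:containers} is silent costs nothing here, since $\eps^m$ easily beats the $\alpha^{-m}$-type loss of a union bound over $E(\Gamma)$.
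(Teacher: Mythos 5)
Your proof is correct and follows essentially the same route as the paper: the paper factors the identical argument through an abstract transference statement (Theorem~\ref{thm:removal_transference}) for $(\gamma e(\cH),\alpha N)$-removable hypergraphs, and your treatment of the exceptional family and of the fingerprints $F\subseteq E(\Gamma)$ (the $p^{|F|}$-weighted union bound combined with a Chernoff bound on $|C_{f_m(\mathbf F)}\cap E(\Gamma)|$) reproduces exactly the computations in that proof. The only differences are organisational: you inline the removal-lemma cover $C_S$ rather than abstracting the removability property, and you verify the $\Delta_{\cH}(m_0)$ condition explicitly where the paper cites it as well known.
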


It should be noted that Theorem \ref{thm:random_removal} does not have nearly as striking implications as the original hypergraph removal lemma. Nonetheless, it is an interesting statement which in some way quantifies  the distribution of the copies of $H$ in $\Gknp$. The graph case ($k=2$) of Theorem \ref{thm:random_removal} was originally obtained by Conlon, Gowers, Samotij, and Schacht \cite{conlon2015klr} using an adaption of the regularity method for random graphs. Conlon and Gowers \cite{conlon2016combinatorial} have proved the  general case under the assumption that $m_k(H)$ is obtained uniquely for $H' = H$, that is, when $H$ is \emph{strictly $k$-balanced}. Here we prove the statement without any additional assumption.

It should be noted that the proof of the removal lemma for random graphs using the regularity method very closely follows the original proof of the regularity lemma for graphs. In contrast, the transference result presented here as well as the one by Conlon and Gowers \cite{conlon2016combinatorial} uses the the hypergraph removal lemma, a notoriously difficult result, as a black box. That being said, instead of directly proving Theorem \ref{thm:random_removal} we prove a more general result which abstracts out the removal property, from which Theorem \ref{thm:random_removal} then follows as a straightforward corollary.

Given $s, r \in \mathbb{N}$, we say that a hypergraph $\cH$ is \emph{$(s, r)$-removable} if for every $I \subseteq V(\cH)$ with $e(I) < s$ there exists $X \subseteq I$ of size $|X| \le r$ such that $I \setminus X$ is an independent set in $\cH$.

\begin{theorem} \label{thm:removal_transference}
    For every $k \in \mathbb{N}$ and $B, \alpha > 0$ there exists $\gamma_0, C > 0$ such that the following holds. Let $\cH$ be a $(\gamma e(\cH), \alpha N)$-removable $k$-graph, for some $\gamma \le \gamma_0$, and suppose $m_0 \in \mathbb{N}$ is such that
    $$
        \Delta_{\cH}(m_0) \le B \mu_{\cH}(m_0)^2/m_0.
    $$
    Let $W \subseteq V(\cH)$ be a random subset formed by taking each element with probability $p \ge C m_0 / N$, independently of all other elements. Then with probability at least $1 - \exp(-\Theta(Np))$ the induced hypergraph $\cH[W]$ is $(\gamma \alpha^k e(\cH) p^k, 3 \alpha N p)$-removable.
\end{theorem}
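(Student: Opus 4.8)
The plan is to apply Theorem~\ref{thm:containers} once for every possible size $m'$ of a potentially bad set $I\subseteq W$ (a set with few edges for which no small deletion makes it independent), and to exploit the decisive structural feature of that theorem: the container $f_{m'}(\mathbf F)$ depends on $W$ only through a small set $F\subseteq W$ and a $(k-1)$-tuple of subsets of $F$. This is what will let me invoke the removability of $\cH$ on the container $C=f_{m'}(\mathbf F)$, which is a set that is ``almost fixed'' (determined by at most $|W|$ bits of information), so that the vertices it forces me to delete meet $W$ in a concentrated, and hence small, number. Applying removability of $\cH$ directly to $I$, or to any $W$-dependent set, is useless for the size bound, because such a deletion set already lives inside $W$ and there is no reason for it to be small; routing through a fixed container is the whole point.

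Concretely, I would first fix a small constant $\eps$ (depending on $k,\alpha,B$ and on the removability parameter $\gamma$), apply Theorem~\ref{thm:containers} with this $\eps$ and $B$ to obtain $\gamma'=\gamma'(\eps,B,k)$, $T$ and the functions $f_m$ ($m\ge Tm_0$), and then set $\gamma_0$ and $C$ so that $\gamma\le\gamma_0$ forces both $\eps<\gamma$ and $\gamma\le 3^k\gamma'$, while $p\ge Cm_0/N$ forces $3\alpha Np\ge Tm_0$. Condition on the likely event $\tfrac12Np\le|W|\le 2Np$ (Chernoff; failure probability $e^{-\Theta(Np)}$). Now take any $I\subseteq W$ with $e(\cH[I])<s:=\gamma\alpha^k e(\cH)p^k$ and put $m'=|I|$. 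If $m'\le 3\alpha Np$ take $X=I$ and we are done; so assume $m'>3\alpha Np$, hence $m'\ge Tm_0$. The family $\cD_{m'}$ of $m'$-sets failing both \ref{prop:i} and \ref{prop:ii} for $f_{m'}$ has size at most $\eps^{m'}\binom Nm'$, so the probability that $W$ contains any of them is at most $(e\eps Np/m')^{m'}<(e\eps/(3\alpha))^{m'}\le 2^{-m'}$, which sums over $m'\in(3\alpha Np,2Np]$ to $e^{-\Theta(Np)}$. So I may assume $I\notin\cD_{m'}$; and \ref{prop:i} cannot hold, because it would give $e(I)\ge\gamma'e(\cH)(m'/N)^k>3^k\gamma'\alpha^k e(\cH)p^k\ge s$. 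Hence \ref{prop:ii} holds: $I\subseteq F\cup C$ with $F\subseteq I$, $|F|=\eps m'$, $\mathbf F\in\cT_{k-1}(F)$ and $C=f_{m'}(\mathbf F)\in\cI_\eps(\cH)$. Since $e(C)\le\eps e(\cH)<\gamma e(\cH)$, the $(\gamma e(\cH),\alpha N)$-removability of $\cH$ supplies $Y=Y(\mathbf F)\subseteq C$ with $|Y|\le\alpha N$ and $C\setminus Y$ independent; then $X:=F\cup(Y\cap I)$ makes $I\setminus X\subseteq C\setminus Y$ independent in $\cH[W]$, with $|X|\le\eps m'+|Y\cap W|$.

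What remains is to show $|Y\cap W|$ is small, uniformly over all relevant pairs $(F,\mathbf F)$. Here I use that $Y$ is a function of $\mathbf F$ alone. Fix a pair $(F,\mathbf F)$ with $F\subseteq V(\cH)$, $|F|=\eps m'$ and $\mathbf F\in\cT_{k-1}(F)$; conditional on $F\subseteq W$ one has $|Y\cap W|\le|F|+\mathrm{Bin}(|Y|,p)$, a binomial of mean at most $\alpha Np$, so Chernoff gives $\Pr[\,|Y\cap W|>2\alpha Np\mid F\subseteq W\,]\le e^{-\Theta(\alpha Np)}$. Multiplying by $\Pr[F\subseteq W]=p^{\eps m'}$ and union-bounding over the at most $\binom N{\eps m'}2^{(k-1)\eps m'}$ such pairs — using $\binom N{\eps m'}p^{\eps m'}\le(e/(3\eps\alpha))^{\eps m'}$ (from $m'>3\alpha Np$) and $\eps m'\le 2\eps Np$ — gives a bound of the form $\exp\!\big(2\eps Np\,[\ln(e/3\eps\alpha)+(k-1)\ln2]-\Theta(\alpha Np)\big)$, which is $e^{-\Theta(Np)}$ once $\eps$ is chosen small enough that $\eps\ln(1/\eps\alpha)$ is dominated by $\alpha$; summing over $m'$ costs only a factor $O(Np)$. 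On the intersection of these $O(Np)$ good events, every $I\subseteq W$ with $e(I)<s$ admits $X\subseteq I$ with $I\setminus X$ independent in $\cH[W]$ and, since $|F|=\eps m'\le\alpha Np$ when $\eps\le\alpha/2$, $|X|\le\alpha Np+2\alpha Np=3\alpha Np$; padding $X$ if desired, this establishes $(s,3\alpha Np)$-removability of $\cH[W]$.

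The step I expect to be the real obstacle is the bookkeeping of constants, and specifically reconciling the two competing demands on $\eps$. On the one hand $\eps$ must be below $\gamma$ so that every container $C\in\cI_\eps(\cH)$ still satisfies $e(C)<\gamma e(\cH)$ and thus lies in the removable range of $\cH$; on the other hand the threshold constant $\gamma'(\eps)$ delivered by Theorem~\ref{thm:containers} must be at least $3^{-k}\gamma$ so that \ref{prop:i} can be excluded for every $I$ with $e(I)<s$. Reconciling these (while also keeping $\eps$ small enough for both union bounds) is exactly what pins down the admissible range $\gamma\le\gamma_0$, and it is the one place where the quantitative strength of Theorem~\ref{thm:containers}, and not merely its qualitative statement, is being used; everything else — the Chernoff estimates, the uniformity of the $e^{-\Theta(Np)}$ terms in $m'$ — is routine.
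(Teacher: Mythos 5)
Your proposal follows the paper's proof step for step: Chernoff on $|W|$, a union bound showing $W$ contains none of the exceptional $m'$-sets of Theorem~\ref{thm:containers}, exclusion of \ref{prop:i} via the edge bound, application of the removability of $\cH$ to the container $C=f_{m'}(\mathbf F)$ to produce a deletion set $Y(\mathbf F)$ depending only on $\mathbf F$, and a Chernoff-plus-union bound over all pairs $(F,\mathbf F)$ with $F\subseteq W$ to control $|Y(\mathbf F)\cap W|$. All of these steps are carried out correctly and coincide with the paper's argument, including the key observation that routing through the $\mathbf F$-determined container is what makes the union bound affordable.

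The obstacle you flag is genuine, and as you have set it up it cannot be resolved: you require simultaneously $\eps<\gamma$ (so that $e(C)\le\eps e(\cH)<\gamma e(\cH)$ puts the container in the removable range of $\cH$) and $\gamma\le 3^k\gamma'(\eps)$ (to rule out \ref{prop:i}), but the constant $\gamma'(\eps)$ produced by the proof of Theorem~\ref{thm:containers} satisfies $\gamma'(\eps)\le\eps(\eps/(2k^2))^{k}\ll\eps$, so the two demands are incompatible for every $\eps$, and nothing in the black-box statement of Theorem~\ref{thm:containers} guarantees $\gamma'(\eps)\ge 3^{-k}\eps$. You should be aware that the paper's own proof rests on the same two incompatible facts (it asserts $e(f_m(\mathbf F))<\gamma e(\cH)$, which needs $\eps<\gamma$, while taking $\gamma_0$ to be the container theorem's $\gamma$ for a fixed $\eps=\eps(\alpha)$), so you have reproduced the argument faithfully, loose end included. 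The repair is to decouple the two roles played by $\gamma$: the container's edge count should be compared against the removability threshold that $\cH$ actually possesses (in the application, $\gamma_r$ from the removal lemma, a constant depending only on $H$ and $\alpha$), by choosing $\eps\le\min\{\eps(\alpha),\gamma_r\}$, while the edge threshold appearing in \ref{prop:i} and in the conclusion is the much smaller $\gamma'(\eps)$. This makes $C$ and the final constant depend on the removability parameter as well, which is harmless for Theorem~\ref{thm:random_removal}.
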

\begin{proof}
    Let $\gamma_0 > 0$ be as given by Theorem \ref{thm:containers} for some sufficiently small $\eps = \eps(\alpha)$. Suppose $\cH$ is $(\gamma e(\cH), \alpha N)$-removable for some $0 < \gamma \le \gamma_0$. 
    
    By Chernoff's inequality, we have $Np/2 < |W| < 2 Np$ with probability $1 - \exp(-\Theta(Np))$. Moreover, a simple union-bound gives the following upper bound on the probability that an $m$-element subset excluded by Theorem \ref{thm:containers}, for $\alpha Np \le m < 2 Np$, is a subset of $W$:
    $$
        \sum_{m = \alpha Np}^{2Np} \eps^m \binom{N}{m} p^m < N \left( \frac{\eps e N}{\alpha N p} p \right)^m < e^{-N p},
    $$
    for sufficiently small $\eps = \eps(\alpha)$. Therefore, from now on we can assume that for every $m$-element subset $I \subseteq W$ with 
    \begin{equation} \label{eq:bound_I_gamma0}
        e(I) < \gamma_0 \alpha^k e(\cH)p^k \le \gamma_0 e(\cH)(m/N)^k,
    \end{equation}
    for $\alpha Np \le m < 2Np$, the property \ref{prop:ii} of  Theorem \ref{thm:containers} holds.
    
    Suppose there exists $I \subseteq W$ which satisfies \eqref{eq:bound_I_gamma0} such that $I \setminus X$ is not independent for every $X \subseteq I$ of size $|X| \le 3 \alpha Np$. In particular, this implies $|I| = m > 3 \alpha Np$. Then, by \ref{prop:ii}, $I \subseteq F \cup f_m(\mathbf{F})$ for some $F \subseteq I$ of size $|F| = \eps m$ and $\mathbf{F} \in \cT_{k-1}(F)$. As $\cH$ is $(\gamma e(\cH), \alpha N)$-removable and $e(f_m(\mathbf{F})) < \gamma e(\cH)$, there exists a subset $R(\mathbf{F}) \subset V(\cH)$ of size $|R(\mathbf{F})| \le \alpha N$ such that $f_m(\mathbf{F}) \setminus R(\mathbf{F})$ is an independent set. If $|W \cap R(\mathbf{F})| < 2 \alpha N p$ then we can make $I$ to be an independent set by removing $F \cup (W \cap R(\mathbf{F}))$ which, for $\eps < \alpha / 2$, is of size at most $3\alpha N p$. Therefore, we must have $|W \cap R(\mathbf{F})| > 2 \alpha N p$.
    
    To summarise, to show the theorem it suffices to bound the probability of an event that $|W \cap R(\mathbf{F})| \ge 2 \alpha N p$ for some $\mathbf{F} \in \cT_{k-1}(F)$ where $F \subseteq W$ is of size $|F| = s$ with $\eps \alpha Np \le s \le 2 \eps Np$. The probability of this happening for one particular $\mathbf{F}$ is, by Chernoff's inequality, at most $e^{-\alpha N p / 3}$, thus a union-bound over all possible $\mathbf{F}$ gives the following:
    $$
        \sum_{s = \alpha \eps Np}^{2 \eps Np} \binom{N}{s} 2^{ks} \cdot p^s \cdot e^{-\alpha N p / 3} \le e^{-\alpha Np/3} \sum_{s = \alpha \eps Np}^{2 \eps Np} \left( \frac{e 2^k N p}{s} \right)^s \le e^{-\alpha Np / 3} \cdot N \left( \frac{e2^k Np}{2\eps Np} \right)^{2\eps Np},
    $$
     which is of order $e^{-\Theta(Np)}$ for $\eps$ sufficiently small with respect to $\alpha$. This concludes the proof.
\end{proof}

Note that the proof works just the same using the hypergraph containers theorem of Saxton and Thomason \cite{saxton2015containers}. For comparison, a variant of Balogh, Morris, and Samotij \cite{balogh2015indset} does not suffice as it only gives $e(I) > 0$ in \ref{prop:i}. Theorem \ref{thm:random_removal} now follows as a straightforward application of Theorem \ref{thm:removal_transference} and the hypergraph removal lemma.

\begin{proof}[Proof of Theorem \ref{thm:random_removal}]
    Let $\cH$ be an $e(H)$-graph whose vertices correspond to edges in $\cK_n^{(k)}$ and $e(H)$ vertices in $\cH$ form an edge if the corresponding hyperedges in $\cK_n^{(k)}$ form a copy of $H$. By the hypergraph removal lemma, for a given $\alpha > 0$ there exists $\gamma_r > 0$ such that $\cH$ is $(\gamma_r e(\cH), \alpha N)$-removable. Furthermore, it is well known (and easy to verify) that $\Delta_{\cH}(m_0)$ satisfies the condition of Theorem \ref{thm:removal_transference} for $m_0 = n^{k - 1/m_k(H)}$ and some $B > 0$. The theorem now follows for $\gamma = \min\{\gamma_r, \gamma_0\}$, where $\gamma_0$ is as given by Theorem \ref{thm:removal_transference} for $\alpha$ and $B$.
\end{proof}

\textbf{Acknowledgment.} The author would like to thank Milo\v s Truji\'c and Andrew Thomason for comments on the early version of the manuscript. The author is also indebted to the anonymous referee for many helpful suggestions and for spotting a subtle (but serious) mistake in the setup of Theorem \ref{thm:containers}, which manifested itself in the proof of Lemma \ref{lemma:saturating}.

{\small \bibliographystyle{abbrv} \bibliography{containers}}

\end{document}